\documentclass[12pt]{article}
\usepackage{amsmath}
\usepackage{amsfonts}
\usepackage{amsthm}
\usepackage{authblk}
\usepackage{algorithm}
\usepackage{algpseudocode}
\usepackage{hyperref}
\usepackage{verbatim}
\usepackage{float}

\theoremstyle{plain}
\newtheorem{theorem}{Theorem}
\newtheorem{lemma}[theorem]{Lemma}
\theoremstyle{definition}
\newtheorem{definition}{Definition}
\theoremstyle{remark}

\newcommand{\ZZ}{\mathbb{Z}}
\newcommand{\NN}{\mathbb{N}}
\newcommand{\seqvar}{\sigma}
\newcommand{\differenceset}[1]{D_{#1}}
\newcommand{\partialsum}[2]{s(#1,#2)}
\newcommand{\partialsumset}[1]{\Sigma_{#1}}
\newcommand{\seqnum}[1]{\href{https://oeis.org/#1}{\textrm \underline{#1}}}

\title{A More Efficient Algorithm for Finding the Number of Permutations of $\ZZ/n\ZZ$ with Distinct Partial Sums}
\author{Baker, Quinn\\
\href{mailto:quinn.baker@ucalgary.ca}{quinn.baker@ucalgary.ca}\\
Department of Computer Science,\\
University of Calgary\\
Calgary, Alberta, Canada, T2N 1N4
\and 
Feaver, Amy \\
\href{mailto:feaveral@grace.edu}{feaveral@grace.edu}\\
Department of Computer Science,\\
Grace College\\
Winona Lake, Indiana, 46590
}

\date{}    
\begin{document}

\begin{titlepage}
\maketitle
\end{titlepage}

\begin{abstract}The notion of the factorial extends to abelian groups under the group operation and any permutation of the elements of the group. Notably, for some finite groups, it is possible to order elements in these groups such that no two ``factorials'' are the same. In this paper we count the number of ways to order elements to achieve this result for the integers modulo 20 and 22 under addition,  as well as introduce an improved algorithm to count these permutations for the integers modulo $n$.  We relate these numbers to the values in an already known sequence via a bijection which we prove in this paper. This proof and other lemmas allow us to calculate new terms for the sequence.
\end{abstract}

\section{Introduction}

In this paper, we study the sequence numbered \seqnum{A141599} in the On-Line Encyclopedia of Integer Sequences (OEIS)\cite{OEIS}. This sequence arises from counting permutations of $\ZZ/n\ZZ$, $n$ even, with specific properties defined in subsection \ref{traddef}.  In particular, we find the 10th, 11th and 12th terms as: 5,074,931,072, 298,557,044,000 and 21,224,961,132,544  respectively using the algorithm described in Section \ref{sec:algo}. Recently, the 10th and 11th terms were verified independently on the OEIS by Bert Dobbelaere.

We show the equivalence between the sequence's original definition and a modern, more generalized definition in order to unify classical results with our current approach to the problem. 

\subsection{Sequence definition}\label{traddef}

The sequence \seqnum{A141599} is defined to be ``the number of difference sets for permutations of $[2k]$ with distinct differences." Here, $[2k]$ is referring to $\ZZ/2k\ZZ$. 

For a fixed $n\in\ZZ^+$ let $\seqvar$ be a permutation of $\ZZ/n\ZZ$,
\[\seqvar := (\seqvar_0, \seqvar_1,\ldots,\seqvar_{n-1}).\]

The \textit{differences} of $\seqvar$ are the $n-1$ values $d_i$ given by
\[d_i\equiv\seqvar_i-\seqvar_{i-1}\pmod{n},\] 
for $1\leq i\leq n-1$. The \textit{difference set} of $\sigma$ is $\differenceset{\seqvar}:=(d_1, d_2,\ldots,d_{n-1})$, and $\sigma$ is said to have \textit{distinct differences} if each $d_i$ is unique, that is, if $\{d_i:1\leq i\leq n-1\}=\ZZ/n\ZZ\setminus\{0\}$. Note that the term ``difference set'' is a misnomer, as the elements of a difference set are ordered. However, we use this terminology as it was introduced in Gilbert\cite{Gil}. 

Distinct differences can only exist if $n$ is even \cite{Baker}. As an example, the second term of the above sequence is 2. This is because when $k=2$ there are exactly two difference sets of $\ZZ/4\ZZ$ with distinct differences for permutations of $\ZZ / 4\ZZ$. These difference sets are $(1, 2, 3)$ and $(3, 2, 1)$.  For example, two of the permutations that produce differences $(1, 2, 3)$ are $\seqvar = (0,1,3,2)$ and $\seqvar'=(2,3,1,0)$, confirmed by taking the differences of each adjacent pair of elements, and we have equivalent difference sets
\[D_{\seqvar}=D_{\seqvar'}=(1,2,3).\]
An exhaustive search of all permutations of $\ZZ / 4\ZZ$ would confirm that $(1,2,3)$ and $(3,2,1)$ are the only inequivalent difference sets with distinct differences. We can see that the second term of the sequence is 2 as a result.

\subsection{Sequencings of \texorpdfstring{$\ZZ/n\ZZ$}{Z/nZ}}

We now define sequenceable groups, as introduced by Gordon\cite{Gor}.

\begin{definition}[Sequenceability] A finite group $G$ of order $n$ with binary operation $\cdot$ is  \textit{sequenceable} if the elements of $G$ can be arranged in an ordered sequence $g_1,g_2,\ldots,g_n$ such that the partial ordered products $g_1\cdot g_2\cdot g_3\cdots g_i$ are all distinct for $1\leq i\leq n$.
\end{definition}

\begin{definition}[Sequencing]
A \textit{sequencing} of $G$ is a permutation of its elements which shows that $G$ is sequenceable. That is, the partial products of the elements of $G$ under this permutation are all distinct. 
\end{definition}

We introduce new notation for ease of reference. Let $\sigma$ be a permutation of $\ZZ/n\ZZ$ as before.
\begin{definition}
The $i$th \textit{partial product} of $\ZZ/n\ZZ$ under the permutation $\sigma$ is $$\partialsum{\sigma}{i} := g_1\cdot g_2\cdot g_3\cdots\cdot g_i$$
\end{definition}

\begin{definition}
The set of partial products of $G$ under $\sigma$ is $$\partialsumset{\sigma}:=\{\partialsum{\sigma}{i}\ |\ 1\leq i \leq n\}.$$ 
\end{definition}

 From these definitions, it follows that a finite group $G$ is sequenceable if and only if there exists a permutation $\sigma$ such that $\partialsumset{\sigma} = G$.

For even $n$, we claim that the number of sequencings of $\ZZ/n\ZZ$ is equal to the number of inequivalent difference sets for $\ZZ/n\ZZ$, and thus we can use properties of sequencings to assist in computing new terms of OEIS sequence \seqnum{A141599}.

As an example, we showed that there are only two difference sets corresponding to $n=4$ that have distinct differences,  $(1,2,3)$ and $(3, 2, 1)$. Thus there should be exactly two sequencings of $\ZZ / 4\ZZ$. Through an exhaustive search, one would find that these are $t_1 = (0, 1, 2, 3)$ and $t_2 = (0, 3, 2, 1)$. 

\subsection{Contributions}

In this paper, we show that for a given $n$ there is a bijection between the number of difference sets of size $n$ with distinct differences and the number of sequencings mod $n$ for any $n\in\mathbb{N}$. We utilize this in order to find new terms of the sequence \seqnum{A141599}. We prove new properties of sequencings and use them in our recursive algorithm which finds terms of \seqnum{A141599}. This is the most efficient algorithm known to the authors to date.

\section{Properties of sequencings}

\begin{theorem}
There is a bijection between sequencings of $\ZZ/n\ZZ$ and difference sets for permutations of $\ZZ/n\ZZ$ with distinct differences.
\end{theorem}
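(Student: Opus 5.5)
The plan is to write down explicit maps in both directions and check that they are mutually inverse. The starting observation is that a difference set with distinct differences is an ordered $(n-1)$-tuple $(d_1,\dots,d_{n-1})$ that lists every element of $\ZZ/n\ZZ\setminus\{0\}$ exactly once. A sequencing of $\ZZ/n\ZZ$ must begin with $g_1=0$. Indeed, if $g_j=0$ for some $j>1$, then $\partialsum{\sigma}{j}=\partialsum{\sigma}{j-1}$, which contradicts distinctness of the partial sums. So a sequencing is $(0,g_2,\dots,g_n)$ where $(g_2,\dots,g_n)$ is an ordering of the nonzero elements. This suggests the map
\[\Phi:(0,g_2,\dots,g_n)\mapsto (g_2,\dots,g_n),\]
that is, $d_i:=g_{i+1}$. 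Equivalently, since $\partialsum{\sigma}{i+1}-\partialsum{\sigma}{i}=g_{i+1}$, the $d_i$ are exactly the differences of the sequence of partial sums.

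First I show that $\Phi$ lands in the right set. Given a sequencing $\sigma$, set $\tau_{i}:=\partialsum{\sigma}{i+1}$ for $0\le i\le n-1$. Because $\partialsumset{\sigma}=\ZZ/n\ZZ$ and there are $n$ partial sums, $\tau$ is a permutation of $\ZZ/n\ZZ$. Its differences are $\tau_i-\tau_{i-1}=g_{i+1}$, which are the distinct nonzero elements. Hence $\Phi(\sigma)=D_\tau$ is a difference set with distinct differences.

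Next comes the inverse. Let $(d_1,\dots,d_{n-1})=D_\tau$ for some permutation $\tau$ with distinct differences, and define $\Psi(D_\tau):=(0,d_1,\dots,d_{n-1})$. This is a permutation of $\ZZ/n\ZZ$, since the $d_i$ run through the nonzero elements. Its partial sums telescope: $\partialsum{\Psi(D_\tau)}{i+1}=d_1+\dots+d_i=\tau_i-\tau_0$. These are the $n$ distinct values $\tau_i-\tau_0$, so $\Psi(D_\tau)$ is a sequencing. By construction $\Phi\circ\Psi$ and $\Psi\circ\Phi$ are the identity, which gives the bijection.

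The only delicate step is the bookkeeping around well-definedness on \emph{difference sets} rather than permutations. Many permutations share a difference set; for example, translates $\tau+c$ give the same tuple, as in the $n=4$ example with $\seqvar,\seqvar'$. So the objects being counted must be treated as the tuples themselves, and $\Psi$ must be defined from the tuple alone, which it is. This is also where I would note that the translates account for the factor $n$ between permutations with distinct differences and difference sets. The other point needing care is the lemma that $g_1=0$; it is what lets the leading $0$ be dropped without losing information, so I would state it first as a short claim.
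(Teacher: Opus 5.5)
Your proposal is correct and takes the same route as the paper. You prepend $0$ to a difference tuple to get a sequencing whose partial sums recover the permutation, and conversely you take the partial sums of a sequencing $(0,g_1,\dots)$ to get a permutation whose differences are the $g_i$.

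Your version is slightly tighter than the paper's in three places. You prove that every sequencing must begin with $0$, which the paper assumes. You telescope to $\tau_i-\tau_0$ instead of normalizing $\tau_0=0$. You also explicitly check that the two maps are mutually inverse.
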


\begin{proof}
The first half of this proof will show we can construct a sequencing of $\ZZ/n\ZZ$ from a difference set with distinct differences, and the second half will show we can construct a permutation whose difference set has distinct differences from a sequencing of $\ZZ/n\ZZ$.
\par
First, let $\differenceset{\tau}=(d_1, d_2,\ldots d_{n-1})$ be a difference set of a permutation $\tau$ of $\ZZ/n\ZZ$ with distinct differences. We use a permutation corresponding to this difference set to show that the new permutation given by $\sigma := (0, d_1, d_2,\ldots d_{n-1})$ is a sequencing for $\ZZ/n\ZZ$.

This difference set corresponds to the permutation $\tau$, which can be constructed as follows:
$\tau_0 = 0,$
$\tau_1 = d_1,$
$\tau_i = d_1+d_2+\cdots + d_i,$ for  $2\leq i \leq n-1.$

Now consider $\sigma = (0, d_1, d_2,\ldots, d_{n-1})$. The set of partial sums of this sequence is 
\begin{align*}
\partialsumset{\sigma} &= \{0, d_1, d_1+d_2,\ldots,d_1+\cdots+ d_{n-1}\} \\
&= \{\tau_0,\tau_1,\ldots,\tau_{n-1}\}\\
&= \ZZ/n\ZZ,
\end{align*}
since $\tau$ is a permutation of $\ZZ/n\ZZ$. Therefore, $(0, d_1, d_2,\ldots, d_{n-1})$ is a sequencing of $\ZZ/n\ZZ$.

\par
Second, let $\sigma'=(0,g_1,g_2, \ldots, g_{n-1})$ be a sequencing of $\ZZ/n\ZZ$. From this, we can calculate its set of partial sums, $\partialsumset{\sigma'} = \{0, g_1, g_1+g_2, \ldots, g_1+g_2+\cdots+g_{n-1}\}$. Since $\sigma'$ is a sequencing, the partial sum set $\partialsumset{\sigma'}$ contains all elements of $\ZZ/n\ZZ$. Let $\tau' = (0, g_1, g_1+g_2, \ldots, g_1+g_2+g_{n-1})$ Calculating its difference set $D_{\tau'}$, we get $\{g_1, g_2, g_3, \ldots, g_{n-1}\}$. Since these are the elements of the sequencing $\sigma'$, we know they are unique, and so this is a difference set with distinct differences.

\end{proof}

Now that we have established a one-to-one correspondence between difference sets with distinct differences and sequencings, we prove more properties of sequencings with the goal of making our algorithm in Section \ref{sec:algo} more efficient.

\begin{lemma}\label{lem:rel_prime_constructive} Let $\sigma$ be any permutation of $\ZZ / n\ZZ$ and let $r$ be any element of $\ZZ / n\ZZ$ such that $\gcd(r,n)=1$. Then $(\sigma_0,\sigma_1,\ldots,\sigma_{n-1})$ is a sequencing if and only if $(r\sigma_0, r\sigma_1,\ldots,r\sigma_{n-1})$ is.
\end{lemma}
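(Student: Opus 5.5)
The plan is to use the fact that multiplication by a unit is an automorphism of the additive group $\ZZ/n\ZZ$. Let $\phi_r:\ZZ/n\ZZ\to\ZZ/n\ZZ$ be given by $\phi_r(x)=rx$. Since $\gcd(r,n)=1$, there is an inverse $r^{-1}$ modulo $n$, so $\phi_r$ is a bijection with inverse $\phi_{r^{-1}}$. It is additive by distributivity: $\phi_r(x+y)=\phi_r(x)+\phi_r(y)$.

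First I check that $r\sigma:=(r\sigma_0,\ldots,r\sigma_{n-1})$ is again a permutation of $\ZZ/n\ZZ$. This holds because it is the image of the permutation $\sigma$ under the bijection $\phi_r$. Next I compare partial sums. For each $i$, additivity gives
\[\partialsum{r\sigma}{i}=r\sigma_0+\cdots+r\sigma_{i}=r\,\partialsum{\sigma}{i},\]
with the indexing adjusted to match the paper's convention for $\partialsum{\sigma}{i}$. Hence $\partialsumset{r\sigma}=\phi_r(\partialsumset{\sigma})$.

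For the forward direction, suppose $\sigma$ is a sequencing, so $\partialsumset{\sigma}=\ZZ/n\ZZ$. Then $\partialsumset{r\sigma}=\phi_r(\ZZ/n\ZZ)=\ZZ/n\ZZ$ because $\phi_r$ is surjective, so $r\sigma$ is a sequencing. An equivalent way to see this: $\phi_r$ is injective, so it maps the $n$ distinct partial sums of $\sigma$ to $n$ distinct partial sums of $r\sigma$.

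For the converse, apply the forward direction to the permutation $r\sigma$ with the unit $r^{-1}$. This recovers $\sigma$, since $r^{-1}r\sigma_i=\sigma_i$.

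There is no real obstacle. The only point needing care is the hypothesis $\gcd(r,n)=1$: it is used exactly to guarantee that $r^{-1}$ exists, and hence that $\phi_r$ is bijective. Without it, $\phi_r$ collapses elements and both the permutation property and the distinctness of partial sums fail.
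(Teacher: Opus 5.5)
Your proof is correct and follows essentially the same route as the paper: multiplication by the unit $r$ permutes $\ZZ/n\ZZ$ and scales every partial sum by $r$, so the partial-sum set of $r\sigma$ is the image of that of $\sigma$, and the converse follows by applying the same argument with $r^{-1}$. You are slightly more careful than the paper in explicitly checking that $r\sigma$ is itself a permutation before invoking the forward direction for the converse.
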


\begin{proof}
For $n\in\NN$ assume that for some permutation $\sigma$, the ordering $(\sigma_i)_{i=0}^{n-1}$ is a sequencing. Then the set of partial sums $\Sigma_\sigma$
is equal to $\ZZ / n\ZZ$. 

Since $r$ is relatively prime to $n$, then multiplying all of the elements of $\ZZ / n\ZZ$ by $r$ gives a permutation of $\ZZ / n\ZZ$. Thus 

\begin{align*} \ZZ / n\ZZ &= \bigl\{r\cdot s(\sigma,i)\pmod{n}\mid 0\leq i \leq n-1\bigr\} \\ &=\Biggl\{\sum_{j=0}^i (r\cdot\sigma_j)\pmod{n}\mid 0\leq i \leq n-1\Biggr\} 
\end{align*}
so multiplying the ordering given by $\sigma$, element-by-element with $r$ also gives a permutation and thus a sequencing.

Further, if $r$ is relatively prime to $n$, it has a multiplicative inverse $r^{-1}\bmod{n}$ which is also relatively prime to $n$. Thus the ordering given by 
\[\bigl(r^{-1}r\sigma_i\bigr)_{i=0}^{n-1}=\bigl(\sigma_i\bigr)_{i=0}^{n-1}\]
is also a sequencing since we are multiplying a known sequencing by a relatively prime element.

\end{proof}

The next lemma is used to significantly reduce the number of computations needed to count sequencings. We define \textit{branches} which will show up later in our algorithm. Let $P$ be the set of all possible permutations of $\ZZ/n\ZZ$.

\begin{definition}\label{branches}
For each $i\in(\ZZ/n\ZZ)^+$ we define the $i$th \emph{branch} of $P$ as 
\[P_i:= \{(0,i,g_2,\ldots,g_{n-1})\in P \}.\]  
That is, the set $P_i$ is the set of all permutations whose first non-zero element is $i$.
\end{definition}

Thus,  $P=\cup_{i=1}^{n-1}P_i$. The motivation for the lemma below is that if we consider the branch $P_1$, we can find $P_a$, if $\gcd(a,n)=1$, not by constructing $P_a$, but by taking $P_1$ and multiplying each element of the permutation by $a\bmod{n}$.

Similarly, we can find all sequencings in this manner, rather than computing them directly. We will see in Lemma \ref{lem:branch} that all sequencings in $P_a$ with $\gcd(a,n)=1$ can be found by multiplying all sequencings beginning with $(0,1,\ldots)$ by $a\bmod{n}$. This technique is used to significantly cut down computation time by simply multiplying the number of elements of $P_1$ by $\varphi(n)$.

\begin{lemma}\label{lem:branch}
Let $C$ be the set of all sequencings of $\ZZ / n\ZZ$. Partition $C$ into branches of sequencings $C_i$ with $1\leq i\leq n-1$, where
\[C_i:= \{(0,i,g_2,\ldots,g_{n-1})\in C \}\]
and let $D$ be the set of all proper divisors of $n$. Then \[\#C = \sum_{d\in D}\#C_d\cdot \varphi(n/d).\]
\end{lemma}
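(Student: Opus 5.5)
The plan is to group the branches $C_i$ according to $\gcd(i,n)$ and use Lemma \ref{lem:rel_prime_constructive} to show that all branches in one group have the same size. Every sequencing of $\ZZ/n\ZZ$ begins with $0$, since the first partial sum is $\sigma_0$, and a $0$ in any later position would repeat a partial sum. Its second entry is therefore some $i$ with $1\le i\le n-1$. So $C$ is the disjoint union of $C_1,\ldots,C_{n-1}$, and
\[\#C=\sum_{i=1}^{n-1}\#C_i.\]
For each $i$ set $d=\gcd(i,n)$. Then $d$ is a divisor of $n$ with $d<n$, so $d$ is a proper divisor in the sense of the statement (including $d=1$). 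This splits the index set $\{1,\ldots,n-1\}$ into the classes $I_d=\{i:\gcd(i,n)=d\}$ for $d\in D$.

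The count of each class is standard. Writing $i=dk$, we have $\gcd(i,n)=d$ if and only if $\gcd(k,n/d)=1$ with $1\le k<n/d$. Hence $\#I_d=\varphi(n/d)$.

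The main step is to show $\#C_i=\#C_d$ for every $i\in I_d$. With $i=dk$ and $\gcd(k,n/d)=1$, I first want a unit $r$ of $\ZZ/n\ZZ$ with $rd\equiv i \pmod n$. It suffices to lift $k$ to a unit modulo $n$: choose $r\equiv k\pmod{n/d}$ with $\gcd(r,n)=1$. This lifting, i.e.\ the surjectivity of $(\ZZ/n\ZZ)^\times\to(\ZZ/(n/d)\ZZ)^\times$, is the one non-trivial ingredient and the step I expect to need the most care. I would prove it via the Chinese remainder theorem: for each prime $p$ dividing $n$ but not $n/d$, impose $r\equiv1\pmod p$. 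Then $rd\equiv kd=i\pmod n$, because $r-k$ is a multiple of $n/d$.

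Once $r$ is in hand, multiplication by $r$ is a bijection on permutations of $\ZZ/n\ZZ$. By Lemma \ref{lem:rel_prime_constructive} it maps sequencings to sequencings, and its inverse is multiplication by $r^{-1}$, which also preserves sequencings. It sends $(0,d,g_2,\ldots)$ to $(0,rd,rg_2,\ldots)=(0,i,\ldots)$, so it restricts to a bijection $C_d\to C_i$. Therefore $\#C_i=\#C_d$.

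Summing over the classes gives
\[\#C=\sum_{d\in D}\sum_{i\in I_d}\#C_i=\sum_{d\in D}\#C_d\cdot\varphi(n/d).\]
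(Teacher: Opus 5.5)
Your proof is correct. Its core idea is the same as the paper's: copy the branch $C_d$ onto other branches by multiplying by units of $\ZZ/n\ZZ$ (Lemma \ref{lem:rel_prime_constructive}), and use $\varphi(n/d)$ to count how many branches each $C_d$ is copied onto. The bookkeeping differs, and yours is the sound version.

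You partition the indices $i$ by $\gcd(i,n)$ and count each class $I_d$ directly as $\varphi(n/d)$. For every $i\in I_d$ you give an explicit bijection $C_d\to C_i$, and disjointness comes free from the branch partition of $C$.

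The paper instead forms $W=\bigcup_{d}\bigcup_{g\in M_d}C_d\cdot g$ with $M_d=\{1\le i\le n/d : (i,n)=1\}$ and argues $W=C$. Two steps there do not hold up:
\begin{enumerate}
\item The claimed identity $\#M_d=\varphi(n/d)$ silently replaces $(i,n/d)=1$ by $(i,n)=1$. For $n=6$ and $d=2$ one gets $M_2=\{1\}$, but $\varphi(3)=2$. So $W$ misses the nonempty branch $C_4\ni(0,4,1,3,5,2)$, and $C\subseteq W$ actually fails as written.
\item The factorization $g_1\equiv g\cdot d$ with $g$ a unit is asserted without justification.
\end{enumerate}
That factorization is exactly the surjectivity $(\ZZ/n\ZZ)^\times\to(\ZZ/(n/d)\ZZ)^\times$ that you isolate and prove via the Chinese remainder theorem. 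With it, the paper's argument can be repaired by taking $M_d$ to be a set of unit lifts of $(\ZZ/(n/d)\ZZ)^\times$, which is in effect what your argument does.
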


\begin{proof}

For each $d\in D$ define

\[
M_d = \{1 \leq i \leq n/d \mid (i,n)=1\}.
\]

It is a well-established fact that $\varphi(n/d) = \#\{1 \leq a \leq n \mid (a,n)=d\}$, and thus we find that
\begin{align*}
\varphi(n/d) &= \lvert\{1 \leq a \leq n \mid (a,n)=d\}\rvert \\
&=  \lvert  \{ 1 \leq a/d \leq n/d \mid (a/d,n/d)=1\}\rvert \\
&= \lvert \{1 \leq i \leq n/d \mid (i,n)=1\}\rvert \\
&=\# M_d.
\end{align*}

Let \[W:=\bigcup_{d\in D} \bigcup_{g\in M_d} (C_d \cdot g).\]

Then $\#W=\sum_{d\in D}\#C_d\cdot \varphi(n/d)$. Thus we will prove this lemma by proving $W=C$.

We first establish that $W\subseteq C$. This follows immediately from Lemma \ref{lem:rel_prime_constructive} which demonstrates that multiplying a sequencing by an element relatively prime to $n$ still gives a sequencing. 

We next show that $C\subseteq W$. Pick any $t\in C$. Then $t$ is a sequencing $c\in C$ and write $c=\{0,g_1,g_2,\ldots,g_{n-1}\}.$ The first nonzero element of $c$, $g_1$ can be factored into $g_1\equiv g\cdot d$ (mod $n$) for some $d\in D$. 

As $g$ is relatively prime to $n$, its inverse $g^{-1}\in\ZZ/n\ZZ$ exists, is relatively prime to $n$ and $g_1g^{-1}\equiv d\ $ (mod $n$). Now consider
$\{0,g_1g^{-1},g_2g^{-1},\ldots,g_{n-1}g^{-1}$.
This  is a sequencing and it starts with $d$ as the first nonzero element so $\{0,g_1g^{-1},g_2g^{-1},\ldots,g_{n-1}g^{-1}\}\in C_d$. Therefore, \[c=\{0,g_1g^{-1},g_2g^{-1},\ldots,g_{n-1}g^{-1}\}\cdot g.\] As this is the form of all elements of $W$ we have that $c\in W$ and thus $C\subseteq W$.

Therefore we have $C=W$.

\end{proof}

\section{Algorithm}\label{sec:algo} 
We have improved the brute-force sequencing-finder algorithm from Baker\cite{Baker}. To explain the improvements, we first introduce the basic algorithm used to determine if a particular permutation is a sequencing. 
\begin{algorithm}[H]
\caption{Sequencing Validator}\label{alg:seqvalidator}
\begin{algorithmic}[1]
\Procedure{sequencingValidator}{$n,permutation$}
\State $sum := 0$
\State $seenVals := $ empty list
\For{$val$ in $permutation$}
\State $sum := (sum + val) \bmod{n}$
\If{$sum$ is already in $seenVals$}
\State \textbf{return} false
\EndIf
\State Add $sum$ to $seenVals$
\EndFor
\State \textbf{return} true
\EndProcedure
\end{algorithmic}
\end{algorithm}

As well, we define a \textit{partial permutation}.
\begin{definition}[Partial Permutation]\label{def:partialperm}
    A partial permutation of $\ZZ/n\ZZ$ is an ordered list of elements $(g_1, g_2, \ldots, g_i)$, $i<n$.
\end{definition}
As the algorithm from \cite{Baker} was a purely brute force algorithm, no information was retained from validating one permutation to another. However, we can in fact use information about partial permutations that are known not to be valid. Since each partial product must be distinct in order for a permutation to be a sequencing, once two partial products are found to be the same, we can rule out that partial permutation as the beginning of a sequencing, as shown in Algorithm \ref{alg:seqvalidator}. This means that every permutation beginning with this partial permutation cannot possibly be a sequencing of $\ZZ/n\ZZ$. Our algorithm improves on previous brute-force algorithms by being able to ``cut off" this family of known bad permutations, preventing their full evaluation using Algorithm \ref{alg:seqvalidator} 1, saving many wasted calculations. We have written a recursive algorithm which constructs and examines each partial permutation as elements are added. If a partial permutation is found to be invalid, then that recursive branch is abandoned. As an example, for $\ZZ / 20\ZZ$, we know the partial permutation $\sigma=(0, 1, 19)$ is already invalid, as $\partialsum{\sigma}{2} = \partialsum{\sigma}{0} = 0$. Our algorithm will thus never check any partial permutations beginning with $(0, 1, 19)$, saving $17!$ calls to Algorithm \ref{alg:seqvalidator}. In addition, our algorithm introduces no new significant overhead, thus acting as a strict improvement over the brute-force algorithm in terms of counting performance. The only downside from prior brute-force algorithms is the loss of knowledge about the sequencings themselves, now only counting their number.

\begin{algorithm}[H]
\caption{Recursive Branch Search}\label{alg:branchrecurse}
\begin{algorithmic}[1]
\Procedure{branchSearch}{$n,sum,partials,rem,total$}\Comment{see Def.\ \ref{var_names}}
\If{$sum\equiv0$ or ($sum\equiv \frac{n}{2}$ and $length(rem) > 1$)} 
\State \textbf{return}
\EndIf
\If{$length(rem)$ is 0}
\State total += 1
\State \textbf{return}
\EndIf
\For{$val$ in $rem$}
\State Remove $val$ from $rem$
\State $sum :\equiv (sum + val) \bmod{n}$
\If{$partials[sum]$ = False} \Comment{Only recurse if this partial ordering is valid}
\State $partials[sum] :=$ True
\State $\textsc{branchSearch}(n, sum, partials, rem, total)$
\EndIf
\State Reset partials, sum, and rem to their values at the start of the loop.
\EndFor
\EndProcedure
\end{algorithmic}
\end{algorithm}

\par This algorithm represents one thread of the recursive algorithm, which processes $P_i$,  such that $i$ is a divisor of $n$ less than $\frac{n}{2}$. 

\begin{definition}\label{var_names}
The parameters to this function represent a partial ordering and all possible ``next values'' in that partial ordering. Below are the definitions for each parameter.
\begin{itemize}
    \item \textit{n}: the modulus, the size of the group the ordering is on. This value never changes between recursive calls. 
    \item \textit{sum}: the sum of all elements of the current partial ordering.
    \item \textit{partials}: an array that notes the partial sums in  this partial ordering. This can be achieved either by a traditional array of values, or as a boolean lookup table, with each index representing whether a partial sum of that index is present. The latter is used in our implementation of the algorithm.
    \item \textit{rem}: an array of remaining values of $\ZZ/n\ZZ$ not yet used in the partial order this call represents. In the initial call, this array is filled with all values of $\ZZ/n\ZZ$ with the exception of $0$ and $d$, where the branch starts with $0,d$ as the first two values in the sequencing. With each recursive call, \textit{rem} is made smaller as the partial ordering grows.
    Each value of this array will be used to recursively call this function with new states representing the partial permutation with each element added to the current state.

    \item \textit{total}: the total number of sequencings found from the initial call. This value is shared across all recursive calls.

\end{itemize}
\end{definition}

As an example, if we were to use this algorithm to find the 3rd term of the sequence A141599, the parameter $n$ would be 6. We must select a a branch to start with; this always involves beginning with 0 and then selecting a divisor of $6$ less than $3$. For the sake of this example we will consider the branch that begins with the sequence \{0,2\}. Thus $\texttt{sum} \equiv 0+2 \pmod6 = 2$. \texttt{partials} indicates the partial sums that have occurred so far, i.e., 0 and 2. The array is \texttt{True} in the 0th and 2nd position, \texttt{False} otherwise. The variable \texttt{rem} will contain all values that do not already appear in the branch, which are 1, 3, 4, and 5. Finally, $total$ always starts at 0 and increments as sequencings are found.

Thus the initial call to this function in the scenario described above to 
\textsc{branchSearch} would have the following parameters:
\begin{itemize}
\item \texttt{n}: $6$
\item \texttt{sum}: $2$
\item \texttt{partials}: \{True, False, True, False, False, False\}
\item \texttt{rem}: $\{1,3,4,5\}$
\item \texttt{total}: $0$
\end{itemize}

An exact analysis of this algorithm's runtime is unknown, however in practice it has performed significantly faster than a simple brute force search.  Reviewing known results we can infer certain properties about the ideal algorithm for finding sequencings.

\begin{table}[H]
    \begin{center}
    \begin{tabular}{|c|c|c|}
            \hline
            $n$ & Sequencings of $\ZZ/n\ZZ$ & Permutations of $\ZZ/n\ZZ$\\
            \hline
            $2$ & $1$ & $2$ \\
            $4$ & $2$ & $24$ \\
            $6$ & $4$ & $720$ \\
            $8$ & $24$ & $40320$ \\
            $10$ & $288$ & $3628800$ \\
            $12$ & $3856$ & $479001600$ \\
            $14$ & $89328$ & $87178291200$ \\
            $16$ & $2755968$ & $20922789888000$ \\
            $18$  & $103653120$ & $6402373705728000$ \\
            $20$ & $5074931072$ & $2432902008176640000$ \\
            $22$ & $298557044000$ & $1124000727777607680000$ \\
            \hline
        \end{tabular}
    \end{center}
    \caption{Table showing the counts of sequencings and total number of permutations of $\ZZ/n\ZZ$ for values of $n$ where the number of sequencings is known and non-zero.}
    \label{tab:counts}
\end{table}

\begin{table}[H]
    \begin{center}
        \begin{tabular}{|c|c|}
        \hline
         $n$ &  Ratio of columns 2 and 3 in Table \ref{tab:counts}\\
         \hline
         $2$ & $0.5$\\
         $4$ & $0.083$\\
         $6$ & $0.0056$\\
         $8$ & $5.95\cdot10^{-4}$\\
         $10$ & $7.94\cdot 10^{-5}$\\
         $12$ & $8.05\cdot 10^{-6}$\\
         $14$ & $1.02\cdot10^{-6}$ \\
         $16$ & $1.32\cdot10^{-7}$\\
         $18$ & $1.62\cdot10^{-8}$\\
         $20$ & $2.09\cdot10^{-9}$\\
         $22$ & $2.66\cdot10^{-10}$\\
         \hline
    \end{tabular}
    \end{center}
    \caption{Table showing the ratio of sequencings to permutations of $\ZZ/n\ZZ$ for values of $n$ where the number of sequencings is known and non-zero. The ratios are approximate.}
    \label{tab:ratio}
\end{table}

The above tables reveal that the ratio of sequencings in the space of all permutations of $\ZZ/n\ZZ$ is strictly decreasing for known $n$ values, and decreases by about an order of magnitude per step of $n$. This suggests the growth of the number of sequencings of $\ZZ/n\ZZ$ as $n$ increases is sub-factorial. The best possible search algorithm for this space could be better than ours, but the time complexity of even the best search algorithm would be slower than exponential time but no slower than $O(n!)$.

The data above also leads us to believe that our approach of quitting our branch search once a partial sequence is found to be something that cannot give rise to a sequencing is a good practice. The ratio of sequencings to permutations is very low and decreasing, so eliminating partial permutations which are not going to result in sequencings as we go is a much better practice than generating all permutations and then searching that space.

This leads us to conclude that our algorithm has very little room for improvement, assuming that searching the space of permutations of $\ZZ/n\ZZ$ for sequencings is the only way to solve this problem of computing terms of the sequence. As we were able to derive new sequencings from known sequencings using Lemma \ref{lem:rel_prime_constructive}, and similar derivations have been discussed in Baker \cite{Baker}, one recommended direction for future progress for this sequence is attempting to identifying enough of these transformations to map a small set of known sequencings into the entire set of sequencings for $\ZZ/n\ZZ$, and to prove the completeness of these transformations.

\section*{Acknowledgements}
The authors would like to thank Enrique Trevi\~{n}o from Lake Forest College for suggesting a much more elegant wording of Lemma \ref{lem:branch} than our original statement. 

Concerned with sequence \seqnum{A141599}.

2020 \textit{Mathematics Subject Classifications}: 
Primary 05-08 Secondary 05A10, 05A05, 11B75
Keywords: Group sequencing, ordered group, combinatorics, permutation, recursive algorithm, modular arithmetic, additive group mod $n$ 


\begin{thebibliography}{99}
\bibitem{Gil}
E.~N. Gilbert,
``Latin squares which contain no repeated digrams,''
\emph{SIAM Review},
vol.~7,
pp.~189--198,
1965.

\bibitem{Gor}
B.~Gordon,
``Sequences in groups with distinct partial products,''
\emph{Pacific Journal of Mathematics},
vol.~11,
pp.~1309--1313,
1961.

\bibitem{OEIS}
OEIS Foundation Inc.,
``The On-Line Encyclopedia of Integer Sequences,''
2026.
Available at: \url{https://oeis.org}

\bibitem{Baker}
Z.~Baker,
``Properties and calculations of constructive orderings of $\mathbb{Z}/n\mathbb{Z}$,''
\emph{Minnesota Journal of Undergraduate Mathematics},
vol.~4,
2021.
Available at:
\url{https://pubs.lib.umn.edu/index.php/mjum/article/view/4154}

\end{thebibliography}
\end{document}